\documentclass[conference]{IEEEtran}
\IEEEoverridecommandlockouts
\usepackage{cite}
\usepackage{amsmath,amssymb,amsfonts}
\usepackage{algorithmic}
\usepackage{graphicx}
\usepackage{textcomp}
\usepackage{xcolor}
\def\BibTeX{{\rm B\kern-.05em{\sc i\kern-.025em b}\kern-.08em
    T\kern-.1667em\lower.7ex\hbox{E}\kern-.125emX}}
\usepackage{amsthm}
\DeclareMathOperator{\PE}    {PE}
\DeclareMathOperator{\MPE}    {MPE_G}
\DeclareMathOperator{\PEG}    {PE_G}
\DeclareMathOperator{\MMSPE}    {MMPE}

\newcommand{\X}{\mathbf{X}} 
\newcommand{\U}{\mathbf{U}} 
\newcommand{\R}{\mathbb{R}} 
\newcommand{\N}{\mathbb{N}}
\newcommand{\Z}{\mathbb{Z}}
\newcommand{\V}{\mathcal{V}} 
\newcommand{\E}{\mathcal{E}}

\newcommand{\Nb}{\mathcal{N}} 
\newcommand{\CG}{\mathcal{G}_\U} 
\newcommand{\A}{\mathbf{A}} 
\newcommand{\card}[1]{\lvert#1\rvert}
\newcommand{\set}[2]{\{ \, #1 \, | \, #2 \, \} } 
\newcommand{\G}{{G}}

\newtheorem{proposition}{Proposition}
\newtheorem{definition}{Definition}
\newtheorem{example}{Example}
\setlength{\abovedisplayskip}{3pt}
\setlength{\belowdisplayskip}{3pt}
\newcommand{\map}[3]{ #1 \colon #2 \longrightarrow #3}
\begin{document}

\title{Multivariate permutation entropy,\\
	a Cartesian graph product approach

\thanks{J.S. Fabila-Carrasco and J. Escudero were supported by the Leverhulme
	Trust via a Research Project Grant (RPG-2020-158).}
}

\author{\IEEEauthorblockN{John~Stewart~Fabila-Carrasco}
\IEEEauthorblockA{\textit{School of Engineering} \\
\textit{Institute for Digital Communications}	\\
\textit{University of Edinburgh}\\
Edinburgh, EH9 3FB, UK \\
John.Fabila@ed.ac.uk}
\and
\IEEEauthorblockN{Chao~Tan}
\IEEEauthorblockA{\textit{School of Electrical and} \\
\textit{Information Engineering} \\
\textit{Tianjin University}\\
Tianjin, 300072, China \\
tanchao@tju.edu.cn}
\and
\IEEEauthorblockN{Javier~Escudero}
\IEEEauthorblockA{\textit{School of Engineering} \\
	\textit{Institute for Digital Communications}	\\
	\textit{University of Edinburgh}\\
	Edinburgh, EH9 3FB, UK \\
javier.escudero@ed.ac.uk}
}

\maketitle

\begin{abstract}
Entropy metrics are nonlinear measures to quantify the complexity of time series. Among them, permutation entropy is a common metric due to its robustness and fast computation. Multivariate entropy metrics techniques are needed to analyse data consisting of more than one time series. To this end, we present a multivariate permutation entropy, $\MPE$, using a graph-based approach. 

Given a multivariate signal, the algorithm $\MPE$ involves two main steps: 1) we construct an underlying graph $\G$ as the Cartesian product of two graphs $G_1$ and $G_2$, where $G_1$ preserves temporal information of each times series together with $G_2$ that models the relations between different channels, and 2) we consider the multivariate signal as samples defined on the regular graph $\G$ and apply the recently introduced permutation entropy for graphs. 

Our graph-based approach gives the flexibility to consider diverse types of cross channel relationships and signals, and it overcomes with the limitations of current multivariate permutation entropy.
\end{abstract}

\begin{IEEEkeywords}
permutation entropy, graph signals, entropy metrics, complexity, multivariate time series.
\end{IEEEkeywords}

\section{Introduction}

Entropy measurements are a common tool used in the analysis of time series to describe the probability distribution of the states of a system. Based on this concept, the seminal paper~\cite{Bandt2002} introduced the so-called permutation entropy ($\PE$) as a measure to quantify complexity in time series, a fundamental challenge in data analysis. This entropy involves calculating permutation patterns, i.e., permutations defined by comparing neighbouring values of the time series. $\PE$ has been applied in a wide range of fields: physical systems~\cite{Yan2012}, economics~\cite{Zunino2009}, and biomedicine~\cite{Cao2004, Olofsen2008}, among many other applications. 

There have been studies on the properties of permutation entropy, including extensions to higher regular domains~\cite{Morel2021} and irregular domains or graphs~\cite{Fabila2021}. Some modifications of $\PE$ consider nonlinear mappings to deal with the differences between the amplitude values~\cite{Azami2018,Rostaghi2016}, or weights in permutation patterns~\cite{Mitiche2017}. Previous research also has extended $\PE$ to different scales~\cite{Costa2002, Azami2016}, or studied its  dependencies with respect to random signals~\cite{Davalos2018}, autoregressive processes~\cite{Davalos2019} or high-order autoregressive processes~\cite{Davalos2020}.

Most biomedical and physical systems are multivariate. Therefore, univariate entropy metrics have been generalised to a multivariate setting, including: multivariate sample entropy~\cite{Ahmed2011}, multivariate dispersion entropy~\cite{Azami2019}, among others. A multivariate multiscale permutation entropy ($\MMSPE$) to analyse physiological signals is proposed in~\cite{Morabito2012a}. However, such algorithm extracts the permutation patterns from each channel  separately regardless of their cross-channel information. $\MMSPE$ 
treated multichannel signals as a unique block and without interactions between the channels. Thus, it works appropriately when the components of a multivariate signal are statistically independent but does not consider the spatial domain of time series.
\subsubsection*{Contributions}
We introduce a multivariate permutation entropy based on the Cartesian product of graphs. Such approach would enable us to, first, overcome the limitation of current multivariate permutation entropy and, second, give flexibility to consider diverse types of cross channel relationships and signals. 
\subsubsection*{Structure of the manuscript}
The outline of the paper is as follows: Section \ref{background} introduces the Cartesian product of graphs and the permutation entropy: univariate, multivariate and for graph signals. Section~\ref{graph-construction} presents the graph associated with a multivariate signal, and Section~\ref{method} presents the multivariate permutation entropy. Section~\ref{exp} shows how $\MPE$ applies to synthetic signals. The conclusions and future lines of research are presented in Section~\ref{conc}. 

\section{Background: graph and permutation entropies}\label{background}
In this section, we introduce general background information, including the definition of a graph and the Cartesian product (Section~\ref{sub:GraphProd}), the original permutation entropy (Section~\ref{sub:originalPE}) and the recently introduced permutation entropy for graph signals (Section~\ref{sub:PEG}).

\subsection{Graphs, Cartesian product and graph signal}\label{sub:GraphProd}
An \emph{undirected graph} (or simply graph) $G$ is defined as the pair $G = (\V,\E)$
which consists of a finite set of vertices or nodes $\V=\{1,2,3,\dots, n\}$, an edge set $\E \subset \{(i,j): i,j\in\V\}$.  The adjacency matrix $\A$ is the corresponding $N \times N$ symmetric matrix on edges with entries $1 = \A_{i j}= \A_{j i}$ if $(i,j)\in \E$ and $0$ otherwise.

A \emph{directed graph} or \emph{digraph} is a graph where each edge has an orientation or direction. 

The \emph{Cartesian product} of two graphs $\G= (\V,\E)$ and $\G'=(\V',\E')$, denoted $\G\square \G'$, is the graph defined by:
\begin{enumerate}
	\item the vertex set is given by: \[\V(\G\square \G')=\V\times \V'=\set{(v,v')}{v\in\V \text{ and } v'\in\V'}\:;\]
	\item two vertices $(v,v')$ and $(u,u')$ are adjacent in $\G\square \G'$ if and only if either
	\begin{itemize}
		\item $v=u$ and $v'$ is adjacent to $u'$ in $\V'$, or
		\item $v'=u'$ and $v$ is adjacent to $u$ in $\V$. 
	\end{itemize}
\end{enumerate}

A \emph{graph signal} is a real function defined on the vertices, i.e.,
$\map{\X}{\V}{\R}$. The graph signal $\X$ can be represented as an $n$-dimensional column vector.

\subsection{Original permutation entropy: univariate and multivariate}\label{sub:originalPE}
\subsubsection*{Univariate permutation entropy} For a time series $\textbf{X}=\left\{x_i\right\}_{i=1}^{n}$, the algorithm to compute $\PE$ is the following \cite{Bandt2002}:

	$1)$ For $2\leq m\in\N$ the \emph{embedding dimension} and $L\in\N$ the \emph{delay time}, the \emph{embedding vector} $\textbf{x}_i^m(L)\in\R^m$ is given by
$\textbf{x}_i^m(L)=\left( x_{i+jL}\right)_{j=0}^{m-1}=\left(x_i,x_{i+L},\dots, x_{i+(m-1)L}\right)$ 
for all $1\leq i \leq n-(m-1)L$.

$2)$ The embedding vector $\textbf{x}_i^m(L)=\left( x_i,x_{i+L},\dots, x_{i+(m-1)L}\right)$ is arranged in the increasing order vector: $\left( x_{i+(k_1-1)L}\leq x_{i+(k_2-1)L} \leq \dots \leq  x_{i+(k_m-1)L}\right)$.   We use the convention in~\cite{Cao2004}. In the case of equal values, the order is given by the corresponding $k's$. Therefore, any embedding vector $\textbf{x}_i^m(L)$ is uniquely mapped onto the vector $(k_1,k_2,\dots,k_m)\in \N^m$.

$3)$ The relative frequency for the distinct permutation $\pi_1,\pi_2,\dots,\pi_k$, where $k=m!$, is denoted by $p(\pi_1),p(\pi_2),\dots,p(\pi_k)$. The permutation entropy $\PE$ for the time series $\textbf{X}$ is computed as the normalised Shannon entropy 
for the $k$ distinct permutations as follows
\[
\PE=-\dfrac{1}{\ln(m!)}\sum_{i=1}^{m!} p(\pi_i) \ln p(\pi_i)\; .
\] 
\subsubsection*{Multivariate permutation entropy} $\MMSPE$ is proposed in \cite{Morabito2012a}. Let $\U$ be a multivariate signal, $\MMSPE$ applies steps $1)$ and $2)$ from the original $\PE$ for each channel. The difference is step $3)$, where the probability distribution aggregates the frequency of patterns from all channels in the multivariate signal, but it does not account for inter-channel relationships, i.e. 

$3)$ The relative frequencies are denoted by $\{\pi_{i,j}\}$, then the marginal relative frequencies describing the distributions of the patterns is defined by: $P_{j}=\sum \pi_{i,j}$ for $j=1,2,\dots,m!$. The multivariate $\MMSPE$ is computed as the normalised Shannon entropy 
for the marginal relative frequencies:
\[
\MMSPE=-\dfrac{1}{\ln(m!)}\sum_{j=1}^{m!} P_j \ln P_j\; .
\] 
\subsection{Permutation entropy for graph signals }\label{sub:PEG}
Let $G = (\V,\E)$ be a graph, $\A$ its adjacency matrix and $\textbf{X}=\left\{x_i\right\}_{i=1}^{n}$ be a signal on the graph. The permutation entropy for the graph signals $\PEG$ is defined in~\cite{Fabila2021} as follows: 

$1)$ For $2\leq m\in\N$ the \emph{embedding dimension}, $L\in\N$ the \emph{delay time} and for all $i=1,2,\dots,n$ we define 
  $y_{i}^{kL}= \frac 1 {\card{\Nb_{kL}(i)}} \sum_{j \in \Nb_{kL}(i)} x_j=\frac{1}{\card{\Nb_{kL}(i)}}(\A^{kL}\X)_i\;$, where
  	$\Nb_k(i)=\set{j\in \V}{\scalebox{.9}[1.0]{it exists a walk on $k$ edges joining $i$ and $j$}}\;$. Hence, we construct the embedding vector $\textbf{y}_i^{m,L}\in\R^m$ given by $\textbf{y}_i^{m,L}=\left( y_{i}^{kL}\right)_{k=0}^{m-1}=\left(y_i^0,y_{i}^{L},\dots y_{i}^{(m-1)L}\right)\;$. 
  	
$2)$ The embedding vector $\textbf{y}_i^{m,L}$ is arranged in increasing order.

$3)$  The relative frequency for the distinct permutation $\pi_1,\pi_2,\dots,\pi_k$, where $k= m!$, is denoted by $p(\pi_1),p(\pi_2),\dots,p(\pi_k)$. The permutation entropy $\PEG$ for the graph signal $\textbf{X}$ is computed as the normalised Shannon entropy
$$
\PEG=-\dfrac{1}{\ln(m!)}\sum_{i=1}^{m!} p(\pi_i) \ln p(\pi_i)\;.
$$
For time series, $\PEG$ reduces to $\PE$. In particular, if $\textbf{X}$ is a time series and $G$ the directed path on $n$ vertices, then for all $m$ and $L$, the equality holds:
$\PE(m,L)=\PE_{G}(m,L)$ (see~\cite[Prop.~3]{Fabila2021}).	
\section{Construction of the graph} \label{graph-construction}
In this section, we will associate a graph for each multivariate signal. Let $\U$ be a multivariate signal; we will construct a 2D graph (using the Cartesian product). One dimension will preserve the temporal information, and another will preserve the cross-channel information.
\subsection*{Dimension 1. Temporal information} 
We associate the directed path with a time series, where a vertex represents each sample time. A \emph{directed path} on $n$ vertices is a directed graph that joins a sequence of different vertices with all the edges in the same direction and is denoted by $\overrightarrow{P_n}$, i.e. its vertices are $\{1,2,\dots,n\}$ and its arcs $(i,i+1)$ for all $1\leq i \leq k-1$. An example is depicted in Fig.~\ref{fig1}(a). 
\subsection*{Dimension 2. Relationships between channels} $\U=\{U_s\}^{s=1,2,\dots,p}$ is a multivariate signal consisting on a set of $p$ time series (or channels). Let $I_p$ be the graph with $p$ vertices representing the interaction between different channels, i.e. $U_i$ and $U_j$ are adjacent in the graph $I_p$ if and only if they interact. 

If we do not have any a priori information about the interactions between channels, by default, we will consider equal interactions between all channels. Complete graphs represent such relations, i.e., we will set $I_p=K_p$ as the complete graph with $p$ vertices, see an example in Fig.~\ref{fig1}(b).

\begin{definition}\label{def:graph}
	Let $\U=\{u_{t,s}\}_{t=1,2,\dots,n}^{s=1,2,\dots,p}$ be a multivariate time series with $p-$channels of length $n$ and with $I_p$ the graph of interactions between channels. We define $\CG$ as the graph associated with $\U$ and given by:
	\[\CG:=\overrightarrow{P_n}\square I_p\;.\]
	We set $I_p$ as the complete graph in case of not having additional information about the interaction of the channels. 
\end{definition}
\begin{figure}[htbp]
	\centerline{\includegraphics[width=80mm]{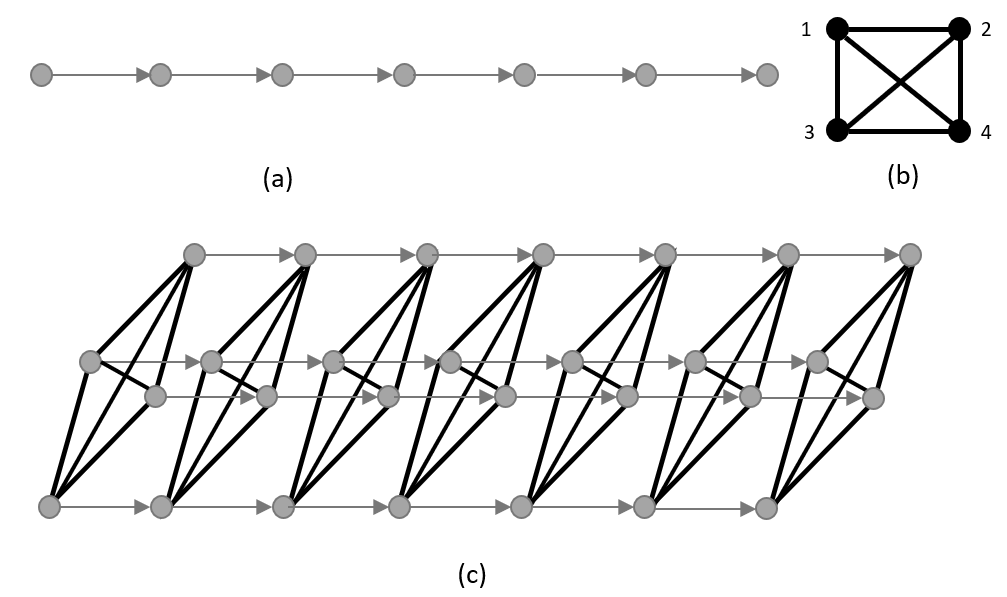}}
	\caption{(a) Directed path with seven vertices, denoted by $\protect\overrightarrow{P_7}$. (b) Interactions between the four channels are encoded with the complete graph on four vertices, denoted by $K_4$. (c) The Cartesian product $\protect\overrightarrow{P_7}\square K_4$.}
	\label{fig1}
\end{figure}

By construction, the vertices in $\CG$ and the sample points in the multivariate signal $\U$ are indexed by the same set, i.e.,	
\[{\V(\CG)=\V(\overrightarrow{P_n}\square I_p)=\V(\overrightarrow{P_n})\times \V(I_p)=\Z_n\times \Z_p}\;;\]
hence, we can consider $\U$ as a graph signal defined on the regular domain $\CG$.

The constructed $\CG$ is a 2D domain, time defines one dimension, and cross-channel dependencies define another one. Hence, the graph $\CG$ preserve the temporal/dependency structure of the multivariate signal $\U$. This is not the case for the currently available implementation of multivariate $\PE$.
\begin{example}
 Consider a multivariate time series with four channels and seven sample points, i.e., $\U=\{u_{t,s}\}_{t=1,2,\dots,7}^{s=1,2,3,4}$,  and we do not have any additional information between channels. By default, we will assume all channels interact with each other. Fig.~\ref{fig1}(c) shows the graph $\CG$ constructed in Def.~\ref{def:graph}. 
 
 An example of a multivariate signal where not all channels interact with each others is shown in Fig.~\ref{fig3}(a). The adjacency matrix associated with the graph $\CG$ is shown in Fig.~\ref{fig3}(b). The graph $\CG$ constructed according to Def.\ref{def:graph} is depicted in Fig.~\ref{fig3}(c).
\begin{figure}[htbp]
	\centerline{\includegraphics[width=80mm]{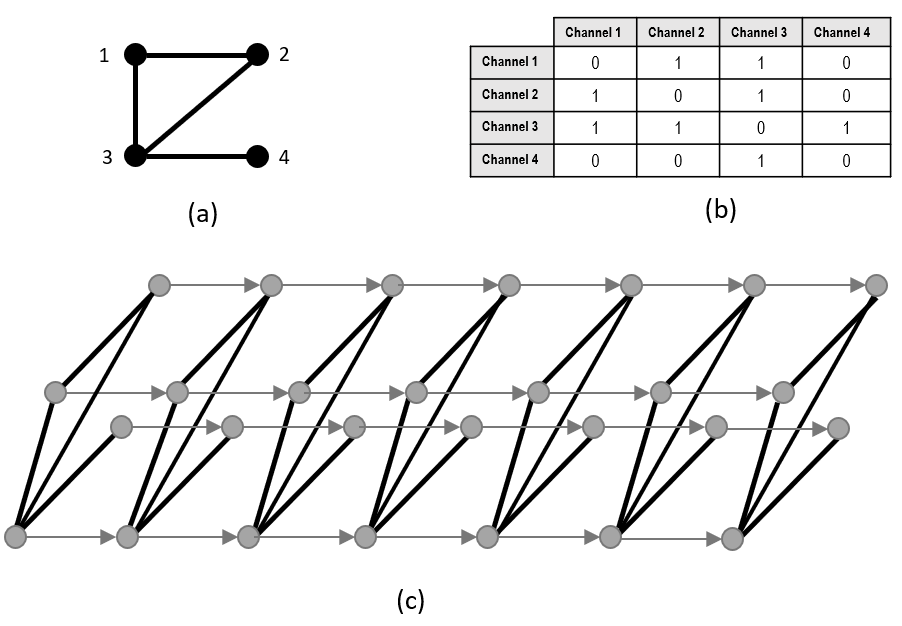}}
	\caption{(a) An undirected graph $\G$ represents the relationships between channels. (b) The adjacency matrix corresponding to the graph $\G$. (c) The Cartesian product $\protect\overrightarrow{P_7}\square G$.}
	\label{fig3}
\end{figure}
\end{example}

\subsubsection*{More complex relations between channels} Observe that the only imposed condition on the graph $I_p$ is the number of vertices, i.e., $I_p$ has as many vertices as the number of channels. Non-complete graphs can model other dependencies between channels. 

We will use undirected edges for bidirectional relationships between channels. We also can use directed edges for unidirectional interaction and include weighted edges for heterogeneous relations. Hence, in general, $I_p$ would be a weighted (directed or undirected) graph.

\subsubsection*{Regular structure but not periodic structure} Covering graphs or periodic graphs are used as models of chemical compounds, like graphene nanoribbons~\cite{Fabila2019}. The graph $\CG$ is not a covering graph, but it can be considered as a geometrical perturbation (see, e.g.,~\cite{Fabila2020}) of a periodic graph. Hence, some properties of the periodic graphs can be preserved in $\CG$, including spectral properties~\cite{Fabila2018}. Such properties are important in graph signal processing~\cite{Ortega2018,Stankovic2019} and combinatorics~\cite{Fabila2021b}. This could be useful to formulate more general improved multivariate signals entropies. 

\section{Multivariate permutation entropy ($\MPE$)}\label{method}
In this section, we define the multivariate permutation entropy $\MPE$. We use the permutation entropy for graph signals $\PEG$ (Section~\ref{sub:PEG}) and the graph construction described in Def.~\ref{def:graph}.

\begin{definition}{\textbf{Multivariate permutation entropy ($\MPE$)}}\label{def}
	
	Let $\U=\{u_{t,s}\}_{t=1,2,\dots,n}^{s=1,2,\dots,p}$ be a multivariate time series with interaction graph $I_p$ between channels
	\begin{enumerate} 
		\item \textbf{Graph construction.} Construct the graph $\CG$ described in Def.~\ref{def:graph}, i.e.,
			\[\CG:=\overrightarrow{P_n}\square I_p\;.\]
		\item \textbf{Graph signal.} Consider $\U$ as a signal defined on the graph $\CG$, i.e.,
		\[\map{\U}{\V(\CG)}{\R}\;.\]
		\item \textbf{$\mathbf{PE}$ for graph signals.} The \emph{multivariate permutation entropy ($\MPE$)} is defined as the permutation entropy for the graph signal $\PEG$ (see Section~\ref{sub:PEG}) for the signal $\U$ and the graph $\CG$, i.e., 
		\begin{equation*}
			\MPE=\PEG(\U)\;.
		\end{equation*} 
	\end{enumerate} 
\end{definition}

Proposition~\ref{prop} proves some important relations between $\MPE$ and $\PE$ metrics presented in the literature (see Table~\ref{tab1}). 
\begin{proposition}\label{prop}
	Let $\U=\{u_{t,s}\}_{t=1,2,\dots,n}^{s=1,2,\dots,p}$ be a multivariate time series with interaction graph $I_p$ between channels:
	\begin{enumerate}
		\item If $p=1$, then $\MPE(\U)=\PE$.
		\item If $s=1$, then $\MPE(\U)=\PEG$.
		\item If $I_p$ is the graph defined by $p$ isolated vertices, then \[\MPE(\U)=\MMSPE\:.\]
		\item If $I_p$ is a directed path on $p$ vertices, then \[\MPE(\U)\approx\PE_{2D}\:.\]
		\item If $\U=\{U_t\}_{t=1,2,\dots,n}^{s=1,2,\dots,p}$, then $\MPE(\U)\approx\PE(U_t)$.
	\end{enumerate}
\end{proposition}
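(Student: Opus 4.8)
The plan is to push everything through the structure of the adjacency matrix of $\CG=\overrightarrow{P_n}\square I_p$ and the neighbourhoods $\Nb_{kL}$ that enter the definition of $\PEG$. The one fact used throughout is $\A(\G\square\G')=\A(\G)\otimes\mathrm{Id}+\mathrm{Id}\otimes\A(\G')$; since the two summands commute, $\A(\CG)^{kL}$ expands as a sum of Kronecker products $\A(\overrightarrow{P_n})^{a}\otimes\A(I_p)^{b}$ over $a+b=kL$, and reading $\Nb_{kL}(t,s)$ and the average $y^{kL}_{(t,s)}$ off this expansion is essentially the whole argument. Items 1--3 come out as exact identities; items 4--5 are genuinely only approximations, and fixing what ``$\approx$'' should mean there is the delicate point.

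For item 1, $p=1$ makes $I_p$ a single isolated vertex, so $\A(I_1)=0$, $\CG=\overrightarrow{P_n}$, and the claim is exactly \cite[Prop.~3]{Fabila2021}, recalled above as $\PE(m,L)=\PEG(m,L)$ on a directed path. Item 2 is the mirror case, when the temporal factor degenerates to a single vertex (a single-sample record, $n=1$): then $\CG=\overrightarrow{P_1}\square I_p=I_p$, and $\MPE(\U)=\PEG(\U)$ with $\PEG$ read directly on $I_p$, which is the definition. For item 3, $I_p$ being $p$ isolated vertices gives $\A(I_p)=0$, so $\CG$ is the disjoint union of $p$ copies of $\overrightarrow{P_n}$ and $\A(\CG)^{kL}=\A(\overrightarrow{P_n})^{kL}\otimes\mathrm{Id}_p$; hence $\Nb_{kL}(t,s)=\{(t+kL,s)\}$, a single vertex inside channel $s$, so $y^{kL}_{(t,s)}=u_{t+kL,s}$ and the embedding vector at $(t,s)$ is precisely the univariate delay vector $(u_{t,s},u_{t+L,s},\dots,u_{t+(m-1)L,s})$ of channel $s$. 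Step~3 of $\PEG$ then pools the ordinal patterns over all vertices $(t,s)$, i.e.\ over all channels and all admissible times --- which is exactly the marginal aggregation $P_j=\sum\pi_{i,j}$ defining $\MMSPE$; hence $\MPE(\U)=\MMSPE$.

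In items 4 and 5 walks that mix the two directions turn $y^{kL}_{(t,s)}$ from a single sample into a local average, and that is precisely the source of the ``$\approx$''. If $I_p=\overrightarrow{P_p}$ then $\CG$ is the directed $n\times p$ grid and, for an interior vertex, $\Nb_{kL}(t,s)=\{(t+a,s+b):a+b=kL,\ a,b\ge0\}$, so $y^{kL}_{(t,s)}$ is the uniform mean of the signal along an anti-diagonal band; one then argues that, away from the boundary, $\mathbf{y}^{m,L}$ carries the same ordinal content as the 2D permutation entropy of \cite{Morel2021}, the discrepancy being confined to an $O\!\left(mL(n+p)\right)$ boundary layer and to ties --- whence $\MPE(\U)\approx\PE_{2D}$. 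Item 5 is the collapsed-signal case $u_{t,s}=U_t$: since the signal does not depend on the channel, $y^{kL}_{(t,s)}$ is a weighted average of $U_t,\dots,U_{t+kL}$; it equals $U_{t+kL}$ exactly when $I_p$ has no edges --- recovering $\PE(U_t)$ --- and otherwise is only a mild moving-average smoothing of it (for $m=2,\,L=1$ one in fact gets equality, since the convex combination $U_t+\tfrac1p(U_{t+1}-U_t)$ has the same order relation to $U_t$ as $U_{t+1}$ does), so $\MPE(\U)\approx\PE(U_t)$. The main obstacle is therefore not a computation but making these two approximate claims precise: one must fix a comparison criterion --- the difference of the normalised entropies, the proportion of coinciding ordinal patterns, or the limit as $n,p\to\infty$ --- and then bound separately the boundary contribution and the perturbation caused by the cross-direction averaging. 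I would isolate this as a single quantitative lemma and leave items 1--3 as immediate corollaries of the Kronecker-product bookkeeping.
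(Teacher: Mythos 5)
Your Kronecker-product bookkeeping is sound and considerably more explicit than what the paper offers: the paper dismisses items $1)$--$3)$ as ``easy properties'' and, for item $5)$, gives essentially your sign argument in the case $m=2$, $L=1$ --- each non-terminal vertex $(t,s)$ inherits the pattern of $t$ because $y^{1}_{(t,s)}$ is a convex combination of $U_t$ and $U_{t+1}$ lying on the $U_{t+1}$ side of $U_t$ --- followed by the same count of at most one exceptional vertex per channel and the large-$n$ limit of the Shannon entropies. Your treatment of items $1)$, $3)$ and $5)$ is therefore correct and supplies the computations the paper omits (in particular the observation that $\A(I_p)=0$ forces $\Nb_{kL}(t,s)=\{(t+kL,s)\}$, which is exactly why the pooled patterns reproduce the marginal aggregation of $\MMSPE$). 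Your reading of item $2)$ as the degenerate temporal factor ($n=1$, so $\CG=I_p$ and $\MPE=\PEG$ by definition) is the only reading under which that item is not a restatement of item $1)$, and is surely what is intended.

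The one place where you diverge in substance is item $4)$, and there your proposal overreaches. The paper proves nothing here: it notes that $\CG=\overrightarrow{P_n}\square\overrightarrow{P_p}$ is the directed grid and then appeals to the \emph{empirical} comparison of $\PEG$ and $\PE_{2D}$ in \cite[Sec.~IV]{Fabila2021}; the ``$\approx$'' is an experimental observation, not a theorem. Your claim that, outside an $O(mL(n+p))$ boundary layer, the embedding vectors ``carry the same ordinal content'' as $\PE_{2D}$ would not survive scrutiny: the $\PE_{2D}$ of \cite{Morel2021} ranks the raw values inside a two-dimensional patch, whereas $\PEG$ ranks the $m$ successive neighbourhood averages $y^{0},\dots,y^{(m-1)L}$; these are different statistics producing pattern distributions that are not in bijection even at interior vertices, so the quantitative lemma you propose to isolate does not exist in the form you describe. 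Your instinct that ``$\approx$'' needs a precise comparison criterion before anything can be proved is exactly right --- but the honest conclusion is that items $4)$ and $5)$ have different epistemic status in this paper ($5)$ admits the asymptotic argument you and the authors both give; $4)$ is only supported by experiments), and your write-up should say so rather than promise a boundary-layer estimate for $4)$.
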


\begin{proof}
	$1)-3)$ are easy properties and follow from the entropy definitions (Table~\ref{tab1}).\\
	$4)$  If $I_p$ is a directed path on $p$ vertices, then $\CG$ is a directed grid graph with $n\times p$ vertices, i.e., $\CG=\overrightarrow{P_n}\square \overrightarrow{P_p}$. Hence, by definition of $\MPE$, the algorithm reduces to apply $\PEG$ to the signal $\U$ defined on the grid $\CG$, and the performance of $\PEG$ and $\PE_{2D}$ are similar (see~\cite[Sec.~IV]{Fabila2021}).\\
	$5)$ We prove the case $m=2$ and $L=1$; the other cases are analogous. By definition of $\PEG$, each vertex belongs to the permutation pattern $\pi_1$ or $\pi_2$. For every vertex $v_i\in\V(\overrightarrow{P_n})$, where $v_i$ is not the last vertex of the path,  it is easy to show that the set of vertices $\set{(v_i,v_j)\in\V(\CG)}{j=1,2,\dots,p}$ also belong to $\pi_1$ (similarly with $\pi_2$). Hence, the relative frequencies are preserved in $\CG$ and $\overrightarrow{P_n}$ (except for at most $s$ vertices corresponding to the last vertex of each path). Then, for a large $n$, the values of its corresponding Shannon entropies are close enough.   
 \end{proof}
\begin{table}[htbp]
\caption{Summary of some permutation entropy metrics.}
\begin{center}
\begin{tabular}{|c|c|c|}
\hline
\textbf{Entropy metric}  & \textbf{Properties/Limitations}\\
\hline
\shortstack{ $\mathbf{PE}$\\ Permutation Entropy \\ Reference:~\cite{Bandt2002} -\cite{Olofsen2008} }& \shortstack{Analyse univariate time series \\ Simple and computationally fast \\ Multiscale extension}\\
 \hline
 \shortstack{ $\mathbf{PE_{2D}}$: Regularly \\sampled 2D data $\mathbf{PE}$ \\ Reference:~\cite{Morel2021} }& \shortstack{Analyse bidimensional data\\ Multiscale extension \\ Valuable for texture analysis}\\
 \hline
\shortstack{ $\mathbf{PE_G}$ \\ $\mathbf{PE}$ for Graph Signals\\ Reference:~\cite{Fabila2021} }&     \shortstack{Analyse graph signals \\(including: time series and image) \\ No multiscale extension yet}\\
\hline
\shortstack{ $\mathbf{MPE}$ \\Multivariate Multiscale $\mathbf{PE}$\\ Reference:~\cite{Morabito2012a}  }&  \shortstack{Analyse multivariate data but\\as a unique block (no interactions)\\Multiscale included}\\

\hline
\shortstack{$\mathbf{MPE_G}$ \\Multivariate $\mathbf{PE}$,\\ a graph product approach\\ Reference:~Definition~\ref{def}}& \shortstack{Analyse multivariate data\\including cross channel relationships\\Use $\mathbf{PE_G}$ for a graph Cartesian product\\No multiscale extension yet}\\  
\hline
\end{tabular}
\label{tab1}
\end{center}
\end{table} 

\section{Experiments}\label{exp}
In this section, we apply the algorithm to a set of multivariate synthetic signals used in the study of dynamical systems.   
\subsection{The H\'enon map}
In discrete-time dynamical systems, one of the most studied is the H\'enon map introduced in \cite{Henon}. Using $\MPE$ we can detect dynamical changes in the two-dimensional system defined by the equations:
\begin{align*}
	x_{n+1}&=1-ax_{n}^{2}+y_{n}\\
	y_{n+1}&=bx_{n}\;.
\end{align*}

The map depends on two parameters: $a$ and $b$. For the values $a=1.4$ and $b=0.3$ indicate the existence of a strange attractor; hence the map is chaotic. With $b=0.3$ and for other values of the parameter $a$, Fig.~\ref{fig10} shows that the map may be periodic, chaotic or intermittent.
\begin{figure}[htbp]
	\centerline{\includegraphics[width=81mm]{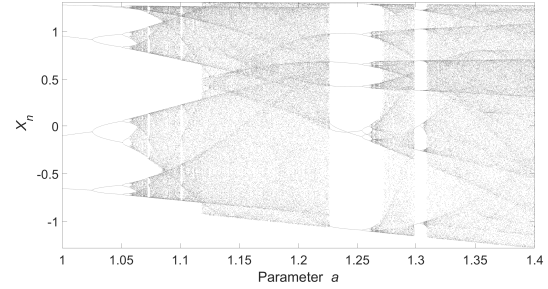}}
	\caption{Orbit diagram for the H\'enon map with $b=0.3$.}
	\label{fig9}
\end{figure}
We will analyse the parameter $a$, we consider $a\in[1,1.4]$ with increments in steps of $.0001$. For each iteration, we define the multivariate signal $\U=\{x_t,y_t\}_{t=1,2,\dots,n}$. The initial condition considered are $x_1= 0.5$, $y_1=0.1$ with $n=100$ (similar results are obtained for other values). We apply $\MPE(\U)$ for detecting the dynamic between the two signals. We also apply $\PE$ for each univariate signal and $\MMSPE$ to the multivariate signal, results obtained by $\PE$ and $\MMSPE$ are similar. Fig.~\ref{fig10} shows the entropy values for $\MPE(\U)$ and $\PE$ for $m=3$ and $L=1$.  
\begin{figure}[htbp]
	\centerline{\includegraphics[width=70mm]{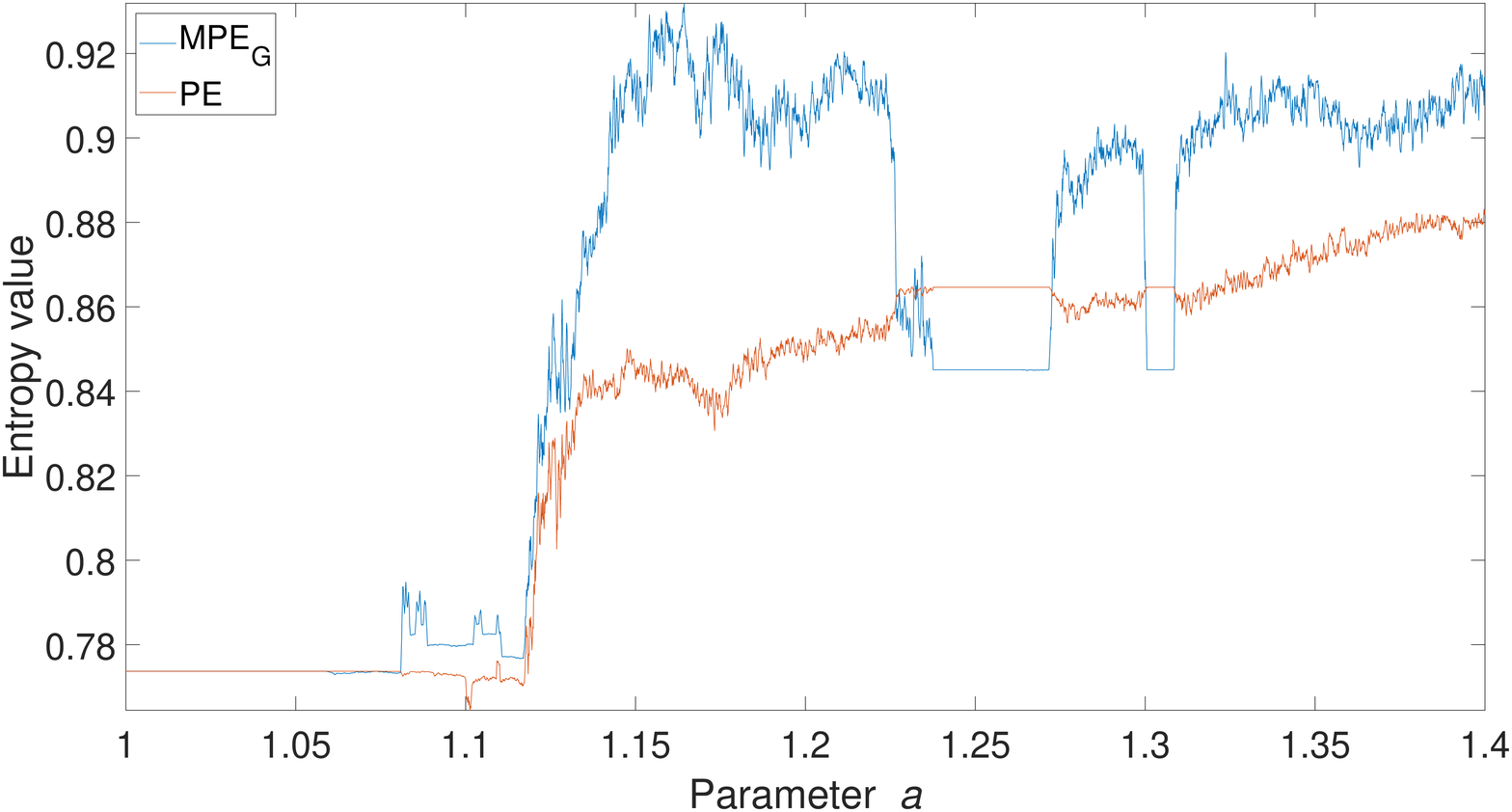}}	
	\caption{Entropy values computed for $m=3$ and  $L=1$.}
	\label{fig10}
\end{figure}

Our algorithm is able to detect chaotic behaviour and windows of stability. Moreover, the wider gap between values of $\MPE$ indicates a larger sensitivity.

\subsection{Lorenz system}
The Lorenz system is an example of a system of ordinary differential equations. This system has important applications in mechanics, biology, and circuit theory~\cite{Lorenz}; and is given by three simultaneous equations:
\begin{align*}{x'}&=\sigma (y-x),\\{y'}&=x(\rho -z)-y,\\{z'}&=xy-\beta z.\end{align*}
Lorenz used the values $\sigma =10$ and $\beta =8/3$. It is well-known that for $\rho =28$, the system showed chaotic behaviour. For $\rho<1$, the origin is a global attractor, i.e., all orbits converge to a unique equilibrium point \cite{Lorenz}. $\MPE$ algorithm captures this fact. Table~\ref{tab2} shows the entropy values computed $\MPE$ for $L=1$ and $m=3,4,5,6,7$. Entropy values are larger when $\rho>1$, reflecting more complexity, while for $\rho<1$, the system tends to the equilibrium; hence, lower values are obtained.
\begin{table}[htbp]
	\caption{Entropy values for the Lorenz system.}
	\begin{center}
		\begin{tabular}{|c|c|c|c|c|c|}
			\hline	
			&$m=3$&$m=4$&$m=5$&$m=6$&$m=7$\\
						\hline		
			$\rho=0.8$ &  0.4524   & 0.2860   & 0.1981   & 0.1477  &  0.1166\\
						\hline	
			$\rho=0.9$ &  0.4538   & 0.2878   & 0.1986   & 0.1489  &  0.1169\\
						\hline	
			$\rho=1.2$ &  0.7258   & 0.6673   & 0.5564   & 0.4478 &   0.3787\\
						\hline	
			$\rho=1.3$ &  0.7226  &  0.6872   & 0.5905   & 0.4815 &   0.4136\\	
						\hline							
		\end{tabular}
		\label{tab2}
	\end{center}
\end{table} 
\section{Conclusions and future work}\label{conc}
 We introduced a multivariate permutation entropy to quantify the complexity of multivariate time series. The algorithm proposed use the Cartesian product of graphs and the recently introduced permutation entropy for graph signals~\cite{Fabila2021}. Our graph-based approach considers diverse type of cross channel relationships and overcomes with the limitations of current multivariate permutation entropy. 

Future lines of research-based on the present work are:

\subsubsection{Multivariate dispersion entropy} Using a similar graph-technique presented in this paper, some univariate metrics can be generalised to multivariate metrics, including Dispersion Entropy.

\subsubsection{Multiscale permutation entropy for graph signals} $\MPE$ requires $\PEG$ in its computation. Multiscale entropy for time series involves downsampling or a coarse-graining process. Such process is unclear for signals defined in graphs; hence a multiscale $\PEG$ is still an open issue.

\subsubsection{Interaction between channels changing with time} The graph $\CG$ (Def.~\ref{def:graph}) uses the Cartesian product. Implicitly, we assumed the relationships between channels are preserved along time. We will explore constructions involving changes in the interactions between channels respect to time. 

\subsubsection{Irregular domains and real-world data} The presented $\MPE$ deals with 2D constructions and synthetic signals. We will explore similar entropies techniques for irregular domains and apply to real-world data, including biomedical signals and phase-flow patterns.

\vspace{12pt}
\color{red}

\end{document}